\documentclass{amsart}
\usepackage[normalem]{ulem}
\usepackage{luatex85}
\numberwithin{equation}{section}
\raggedbottom
\usepackage{xcolor}
\usepackage{pgffor}
\usepackage{url} 
\usepackage{breakurl}

\usepackage{tikz}

\definecolor{Vino}{rgb}{0.356,0,0}
\definecolor{Ruta}{rgb}{0.309, 0.459, 0.208}
\definecolor{Modra}{rgb}{0,0,0.356}
\definecolor{Rumena}{rgb}{0.5,0.35,0}
\usepackage[final,colorlinks,linkcolor=Rumena,citecolor=Modra,urlcolor=black,breaklinks]{hyperref}

\usepackage{longtable}
\usepackage[all]{xy}
\usepackage{amssymb}
\usepackage{comment}
\usepackage{stmaryrd}
\usepackage{xcolor}
\usepackage{mathtools}
\usepackage{enumitem}
\usepackage[T1]{fontenc}
%\usepackage[utf8]{inputenc}
%\def\izhitsa{Ѵ}

%  amstexincl.tex
%
%\usepackage{showkeys}
\definecolor{Vino}{rgb}{0.356,0,0}
\definecolor{Ruta}{rgb}{0.309, 0.459, 0.208}
%\usepackage[final,colorlinks,linkcolor=gray,citecolor=Vino,urlcolor=black]{hyperref}
%\input{scr}
%
%  scr.tex
%
\let\cal\mathcal

\def\Oscr{{\cal O}}

%\input {blb}
%
%  blb.tex
%
\let\blb\mathbb
\def\CC{{\blb C}}

\def \PP{{\blb P}}

\def \ZZ{{\blb Z}}

\def \NN{{\blb N}}
\def \RR{{\blb R}}

\def\Ann{\operatorname{Ann}}

\def\Supp{\mathop{\text{\upshape Supp}}}

\def\Spec{\operatorname {Spec}}

\def\im{\operatorname {im}}

\def\ker{\operatorname {ker}}

\def\r{\rightarrow}

\DeclareMathOperator{\Proj}{Proj}

\def\codim{\operatorname{codim}}

%
%  sectionlemma.tex
%
% theoremstyle plain

\newtheorem{lemma}{Lemma}[section]
\newtheorem{proposition}[lemma]{Proposition}

\newtheorem{corollary}[lemma]{Corollary}

\theoremstyle{definition}

{

}

\theoremstyle{remark}

\newtheorem{remark}[lemma]{Remark}

\newdimen\uboxsep \uboxsep=1ex
\def\uboxn#1{\vtop to 0pt{\hrule height 0pt depth 0pt\vskip\uboxsep
\hbox to 0pt{\hss #1\hss}\vss}}

\def\uboxs#1{\vbox to 0pt{\vss\hbox to 0pt{\hss #1\hss}
\vskip\uboxsep\hrule height 0pt depth 0pt}}

\marginparsep=5pt
\marginparwidth=0.25\textwidth
\let\oldmarginpar\marginpar
\long\def\marginpar#1{\oldmarginpar{\raggedright \tiny \baselineskip 0pt #1}}

\setcounter{tocdepth}{1}

%\setlength{\oddsidemargin}{.3cm}
%\setlength{\evensidemargin}{.335cm}
%\addtolength{\textwidth}{0.67cm}

\makeatletter
\newcommand*\bigcdot{\mathpalette\bigcdot@{.5}}
\newcommand*\bigcdot@[2]{\mathbin{\vcenter{\hbox{\scalebox{#2}{$\m@th#1\bullet$}}}}}
\makeatother

\makeatletter
\newcommand{\pushright}[1]{\ifmeasuring@#1\else\omit\hfill$\displaystyle#1$\fi\ignorespaces}
\newcommand{\pushleft}[1]{\ifmeasuring@#1\else\omit$\displaystyle#1$\hfill\fi\ignorespaces}
\makeatother

\author[\v{S}pela \v{S}penko and Michel Van den Bergh]{\v{S}pela \v{S}penko and Michel
  Van den Bergh} 
\address[\v{S}pela \v{S}penko]{D\'epartement de Math\'ematique, Universit\'e Libre de Bruxelles, Campus de la Plaine CP 213, Bld du Triomphe, B-1050 Bruxelles}
\email{spela.spenko@ulb.be}
\address[Michel Van den Bergh]{Vakgroep Wiskunde, Universiteit Hasselt, Universitaire Campus \\
  B-3590 Diepenbeek}
\email{michel.vandenbergh@uhasselt.be}
\address[Michel Van den Bergh]{Vakgroep Wiskunde en Data Science, Vrije Universiteit Brussel, Pleinlaan 2, 1050 Brussel} 
\email{michel.van.den.bergh@vub.be}
\thanks{The first author is supported by a MIS grant from the National Fund for Scientific Research (FNRS) and an ACR grant from the Université Libre de Bruxelles. The second author is a senior researcher at the Research
  Foundation Flanders (FWO).  This project has received funding from the European Research Council (ERC) under the European Union's Horizon 2020 research and innovation programme (grant agreement No 885203). The project was also supported by  FWO grant G0D8616N: ``Hochschild cohomology and
  deformation theory of triangulated categories''.}

\subjclass{14A05}
\title[On the GKZ discriminant locus]{On the GKZ discriminant locus}

\begin{document}  

\begin{abstract}
Let $A$ be an integral matrix and let $P$ be the convex hull of its columns. By a result of Gelfand, Kapranov and Zelevinski, the so-called principal $A$-determinant locus is equal to  the union of the \emph{closures} of the discriminant loci of the Laurent polynomials associated
  to the faces of $P$ \emph{that are hypersurfaces}. In this short note we show that it is also the straightforward union of all the discriminant loci,
  i.e.\ we may include those
  of higher codimension, and there is no need to take closures. This
  answers a question by Kite and Segal.
\end{abstract}

\maketitle

\section{Introduction and statement of the main result}
Let $k,d\in \NN$ and let $A=\{a_1,\dots,a_d\}\subset \ZZ^k$.  
 Let $P\subset\RR^{k}$ be the convex hull of $(a_i)_i$.
Let $F$ be a face  of $P$.
Denote by $\nabla_F\subset \CC^{F\cap A}$ the {\em discriminant
  locus}\footnote{In \cite[\S9.1.A]{GKZbook} $\nabla_F$ is defined as the 
  closure of our $\nabla_F$.} of
$f^F:=\sum_{a_i\in F} \alpha_i x^{a_i}\in \CC[x_1^{\pm 1},\ldots,x_k^{\pm 1}]$, i.e.\ the set of all
$\alpha\in \CC^{F\cap A}$ such that the singular locus of $V(f_\alpha^F)$ (i.e.\ the
common zeros of $f_{\alpha}^F$, $(\partial_i f_{\alpha}^F)_{i=1,\ldots,k}$) meets $(\CC^*)^k$
\cite[\S9.1.A]{GKZbook}. 
It is known that $\bar{\nabla}_F$ is irreducible and $\dim \bar{\nabla}_F<|F\cap A|$ \cite[Chapter 9, after (1.1)]{GKZbook}. Set 
$V(A):=\bigcup_F p^{-1}_F(\nabla_F)$,  where $p_F:\CC^A\to \CC^{F\cap A}$ is the projection. 
If $\bar{\nabla}_F$ is a hypersurface in $\CC^{F\cap A}$ (i.e.\ if $\dim \nabla_F=|F\cap A|-1$) then we denote by $\Delta_F$ its defining equation,  otherwise we set $\Delta_F=1$.
We denote the composition $\Delta_F\circ p_F:\CC^A\r \CC$ also by $\Delta_F$.
The {\em principal $A$-determinant} is a certain polynomial function on $\CC^A$
which is defined in \cite[\S10.1.A]{GKZbook}.
In \cite[Theorem 10.1.2]{GKZbook} it is shown that $E_A=\prod_{F\subseteq P}\Delta_F^{m_F}$ for suitable multiplicities $m_F\geq 1$.
Hence
\begin{equation}
\label{basic_inclusion}
V(E_A)=\bigcup_{F\subset P} V(\Delta_F)\subset  \bigcup_{F\subset P}\overline{p^{-1}_F(\nabla_F)}=\overline{V(A)}
\end{equation}
and furthermore $V(A)$ and $V(E_A)$  only differ in codimension $\leq 2$. 
We show that they in fact coincide and moreover $V(A)=\overline{V(A)}$, i.e.\ $V(A)$ is automatically closed.
This  answers positively a question by Kite and Segal in \cite[p.21]{Kitethesis}, \cite[Remark 4.9]{KiteSegal}.
\begin{proposition}\label{lem:VEA=VA}
We have $V(A)=\overline{V(A)}=V(E_A)$. 
\end{proposition}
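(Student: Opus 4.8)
The plan is to prove the chain $V(A)=\overline{V(A)}=V(E_A)$ by splitting it into the inclusions (a) $\overline{V(A)}\subseteq V(A)$ (closedness) and (b) $V(A)\subseteq V(E_A)$; the remaining inclusion $V(E_A)\subseteq V(A)$ then comes for free from (a) together with the paper's observation $V(E_A)\subseteq\overline{V(A)}$. Since $p_F\colon\CC^d\to\CC^F$ is a coordinate projection we have $p_F^{-1}(S)=S\times\CC^{d\setminus F}$, hence $\overline{V(A)}=\bigcup_F p_F^{-1}(\overline{\nabla}_F)$, and $p_G=p_{G\subseteq F}\circ p_F$ whenever $G\subseteq F$ (here $p_{G\subseteq F}\colon\CC^F\to\CC^G$). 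Consequently (a) follows from the statement (i) $\overline{\nabla}_F\setminus\nabla_F\subseteq\bigcup_{G\subsetneq F}p_{G\subseteq F}^{-1}(\nabla_G)$, the union over the proper faces $G$ of the polytope $F$ (each of which is a face of $P$): indeed (i) gives $p_F^{-1}(\overline{\nabla}_F)\subseteq p_F^{-1}(\nabla_F)\cup\bigcup_{G\subsetneq F}p_G^{-1}(\nabla_G)\subseteq V(A)$ for every $F$. And (b) follows from the statement (ii) if $\overline{\nabla}_F$ is not a hypersurface in $\CC^F$, then already $\nabla_F\subseteq\bigcup_{G\subsetneq F}p_{G\subseteq F}^{-1}(\nabla_G)$: granting (ii), an induction on $\dim F$ gives $p_F^{-1}(\nabla_F)\subseteq V(E_A)$ for every face $F$ --- when $\overline{\nabla}_F$ is a hypersurface use $\nabla_F\subseteq\overline{\nabla}_F$ and $p_F^{-1}(\overline{\nabla}_F)\subseteq V(E_A)$ (as $\Delta_F$ divides $E_A$), otherwise use (ii) and the inductive hypothesis for the faces $G\subsetneq F$. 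Combining, $V(E_A)\subseteq\overline{V(A)}=V(A)\subseteq V(E_A)$.

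Both (i) and (ii) rest on the following \emph{limiting lemma}. Realise the projective toric variety of the polytope $F$ as $X_F=\overline{\{[(x^{a_i})_{i\in F}]:x\in(\CC^*)^n\}}\subseteq\PP(\CC^F)$, with torus orbits $O_G$ indexed by the faces $G$ of $F$, the orbit $O_G$ being the copy of the torus of $X_G$ in the coordinate subspace $\{\xi:\xi_i=0\text{ for }i\notin G\}$. Claim: if $\alpha^{(j)}\to\alpha$ in $\CC^F$, if $x^{(j)}\in(\CC^*)^n$ is a critical point of $f^F_{\alpha^{(j)}}$ for every $j$, and if the images of the $x^{(j)}$ in $X_F$ converge to some $\xi\in O_G$ with $G\subsetneq F$, then $\alpha|_G\in\nabla_G$. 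One proves this by passing to the limit in the Euler form of the critical-point equations, $\sum_{i\in F}(a_i)_l\,\alpha^{(j)}_i\,(x^{(j)})^{a_i}=0$ for all $l$ (the equation $f^F(x^{(j)})=0$ being automatic since $(a_i)_k=1$): fixing $i_0\in G$ and scaling $\xi$ so that $\xi_{i_0}=1$, one has $(x^{(j)})^{a_i-a_{i_0}}\to\xi_i$ with $\xi_i=0$ for $i\in F\setminus G$ and $\xi_i=y^{a_i-a_{i_0}}$ for $i\in G$, where $y\in(\CC^*)^n$ represents the point $(\xi_i)_{i\in G}$ of the torus of $X_G$; dividing the $l$-th equation by $(x^{(j)})^{a_{i_0}}$ and letting $j\to\infty$ kills the terms with $i\notin G$ and leaves $\sum_{i\in G}(a_i)_l\,\alpha_i\,y^{a_i}=0$ for all $l$, i.e.\ $y$ is a critical point of $f^G_\alpha$ and $\alpha|_G\in\nabla_G$.

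Given the lemma, (i) is immediate: for $\alpha\in\overline{\nabla}_F\setminus\nabla_F$, take $\alpha^{(j)}\to\alpha$ in $\nabla_F$ with critical points $x^{(j)}$; after passing to a subsequence their images in $X_F$ converge, and the limit cannot lie in the open torus (else $\alpha\in\nabla_F$), so it lies in some $O_G$, $G\subsetneq F$. For (ii), consider the incidence variety $\widetilde{\nabla}_F=\{(x,\alpha)\in(\CC^*)^n\times\CC^F:x\text{ is a critical point of }f^F_\alpha\}$. Over $(\CC^*)^n$ this is a vector subbundle of the trivial bundle with fibre $\CC^F$: the fibre over $x$ is the kernel of $\alpha\mapsto\bigl(\sum_{i\in F}(a_i)_l\,\alpha_i\,x^{a_i}\bigr)_l$, and the coefficient matrix $[(a_i)_l\,x^{a_i}]_{l,i}=[(a_i)_l]_{l,i}\,\operatorname{diag}\bigl((x^{a_i})_{i\in F}\bigr)$ has rank $\dim\operatorname{span}\{a_i:i\in F\}=\dim F+1$; so the fibre has constant dimension $|F|-\dim F-1$, $\widetilde{\nabla}_F$ is irreducible, and $\dim\widetilde{\nabla}_F=n+|F|-\dim F-1$. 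Since the second projection dominates $\overline{\nabla}_F$, every fibre $C_\alpha$ ($\alpha\in\nabla_F$) satisfies $\dim C_\alpha\ge n+|F|-\dim F-1-\dim\overline{\nabla}_F$. Moreover $C_\alpha$ is stable under the subtorus $T_0\subseteq(\CC^*)^n$ of dimension $n-\dim F$ on which all monomials $x^{a_i}$ ($i\in F$) coincide --- because $f^F_\alpha\circ t$ is a scalar multiple of $f^F_\alpha$ for $t\in T_0$ --- and $(\CC^*)^n/T_0$ is the torus of $X_F$; hence the image of $C_\alpha$ in $X_F$ has dimension $\dim C_\alpha-(n-\dim F)\ge|F|-1-\dim\overline{\nabla}_F$, which is $\ge1$ exactly when $\overline{\nabla}_F$ is not a hypersurface. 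A positive-dimensional subvariety of the torus of the \emph{projective} variety $X_F$ has a limit point outside the torus, i.e.\ in some $O_G$ with $G\subsetneq F$; lifting to $(\CC^*)^n$ a sequence in $C_\alpha$ approaching that point and invoking the limiting lemma with $\alpha^{(j)}\equiv\alpha$ gives $\alpha|_G\in\nabla_G$, which is (ii).

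The step I expect to be the real obstacle is (ii): identifying ``$\overline{\nabla}_F$ is not a hypersurface'' with ``the critical locus $C_\alpha$ has excess dimension'' and then using the completeness of $X_F$ to force $C_\alpha$ to meet the toric boundary. This is where genuine geometry enters, and it is what lets one bypass the (delicate) classification of point configurations with a non-hypersurface $A$-discriminant. A secondary but routine issue is to make the limits in $X_F$ in the limiting lemma precise --- the normalisation of $\xi$ at $i_0$ and the passage to subsequences --- which is standard toric geometry.
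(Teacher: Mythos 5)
Your proposal is correct, and it takes a genuinely different route from the paper. The paper characterises $\CC^A\setminus V(A)$ as the locus where the graded ring map $\CC[y_1,\dots,y_k]\to\CC[(x^{a_i})_i]$, $y_i\mapsto x_i\partial_i f$, is finite (this is where the orbit description of $X_A$, Propositions \ref{prop:XA} and \ref{prop:XA1}, enters); closedness of $V(A)$ then follows from an openness-of-finiteness statement for families of graded rings (the appendix), and the equality $V(A)=V(E_A)$ from the codimension-$2$ discrepancy between $V(E_A)$ and $\overline{V(A)}$ together with the Grothendieck--Serre extension theorem applied to the relative version of $\gamma$. You share the essential toric input --- a critical point degenerating to the orbit $O_G$ of the boundary of $X_F$ produces a point of $\nabla_G$; your ``limiting lemma'' is an analytic-topology incarnation of the computation inside the proof of Lemma \ref{lem:folklore} --- but you replace both pieces of commutative-algebra machinery: closedness comes from compactness of $X_F$ plus a subsequence argument, and the inclusion $V(A)\subseteq V(E_A)$ comes from the dimension count on the incidence variety $\widetilde{\nabla}_F$ (a vector bundle of rank $|F|-\dim F-1$ over the torus, so that when $\overline{\nabla}_F$ has codimension $\ge 2$ every critical locus $C_\alpha$ has excess dimension modulo the stabilising subtorus $T_0$, forcing its image in the complete variety $X_F$ to meet the toric boundary). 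Your argument is more elementary (only Chevalley's fiber-dimension bound and completeness, no Serre extension theorem, no appendix) and it proves the sharper structural statement (ii) that a non-hypersurface $\nabla_F$ is already covered by the discriminants of the proper faces of $F$; the paper's argument is shorter to write down rigorously and sidesteps the point-set care your limits require (constructibility, normalising projective coordinates at $i_0$, passing to subsequences), all of which you have handled correctly. The only inputs you take from GKZ beyond the paper's are the same ones the paper uses: irreducibility and the dimension bound for $\overline{\nabla}_F$, the orbit description of $X_F$, and $m_F\ge 1$ in the factorisation of $E_A$.
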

\begin{remark}
Note that the individual parts $p_F^{-1}(\Delta_F)$ in the definition of $V(A)$ are in general not locally closed (so they are not subschemes). We can already see this in the simple example $A=\{(2,1),(1,1),\allowbreak (0,1)\}$. In that case we have $f^P(x,y)=ax^2y+bxy+cy$
for $x,y\in \CC^\ast$ and $a,b,c\in \CC$. The parts in the decomposition of $V(A)$ are:
\[
V(A)=\biggl(\biggl(V(b^2-4ac)-V(b,a)-V(b,c)\biggr)\cup V(a,b,c)\biggr)\cup V(a)\cup V(c)
\]
which fit together to form the hypersurface $V(ac(b^2-4ac))$ as predicted by Proposition \ref{lem:VEA=VA}.
The parts $p_F^{-1}(\Delta_F)$ may also have higher codimension; this occurs for example in the so-called ``quasi-symmetric'' case for all $F$ such that $\{a_i\mid a_i\in F\}$ do not form a circuit, cf. \cite[Definition 3.2, Theorem 3.4]{Kite}. 
\end{remark}
\begin{remark}
 Our interest in Proposition \ref{lem:VEA=VA} comes from the fact that it implies that if  $\alpha\not\in V(E_A)$ then the zero
  fiber of $f_{\alpha}^F:(\CC^*)^k\to \CC$ is nonsingular for all $F$,
  which does not follow directly from the description of $V(E_A)$. 
This observation is used crucially in  \cite{main_document} and seems to be of independent interest.
\end{remark}

\section{Toric preliminaries}

Let $X_A$ be the closure of $\{(x^{a_1}:\dots:x^{a_d})\in \PP^{d-1}\mid x\in (\CC^*)^k\}$ in $\PP^{d-1}$. 
The action of $(\CC^\ast)^k$ on $\PP^{d-1}$ with weights $a_1,\ldots,a_d$ restricts to an action on $X_A$
which has a dense orbit. This makes $X_A$ into a toric variety (generally non-normal).
\begin{proposition}\cite[Proposition 5.1.9]{GKZbook}\label{prop:XA}
The  torus orbits in $X_A$ are in $1{:}1$ correspondence with the faces
  of $P$. The orbit $O_F$ corresponding to a face $F$ of $P$ consists of
  $\{(z_1:\dots:z_d)\in X_A\mid z_i\neq 0\; \text{if $a_i\in F$},
  z_i=0 \;\text{if $a_i\not\in F$}\}$. $O_F$ is also equal to $\{(z_1:\dots:z_d)\in
  \PP^{d-1}\mid \exists u\in (\CC^*)^k\text{such that} \;z_i=u^{a_i}\; \text{if
    $a_i\in F$}\text{ and } z_i=0\;\text{if $a_i\not\in F$}\}$.
\end{proposition}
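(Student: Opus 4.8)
The plan is to classify the $T_A$-orbits chart by chart in $\PP^{d-1}$, using the orbit description for possibly non-normal affine toric varieties, and then to glue. Write $T_A$ for the subtorus of the dense torus of $\PP^{d-1}$ which is the image of $(\CC^*)^k\to(\CC^*)^d/\CC^*$, $x\mapsto(x^{a_1}:\dots:x^{a_d})$; it acts on $\PP^{d-1}$ by scaling homogeneous coordinates, with $u\in(\CC^*)^k$ acting by $z_i\mapsto u^{a_i}z_i$, and $X_A=\overline{T_A\cdot p_0}$ for $p_0=(1:\dots:1)$. I will use repeatedly that $X_A$ and every chart $U_j=\{z_j\neq 0\}$ are $T_A$-stable, and that the non-vanishing locus $I(z):=\{i\mid z_i\neq 0\}$ is constant on each $T_A$-orbit. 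Since $\PP^{d-1}=\bigcup_j U_j$ it is enough to describe the orbits of each $X_A\cap U_j$, and in the affine coordinates $z_i/z_j$ on $U_j\cong\CC^{d-1}$ one checks directly that $X_A\cap U_j=\Spec\CC[\NN(A-a_j)]$, the affine toric variety of the monoid generated by $a_1-a_j,\dots,a_d-a_j$. Because $a_i=(a'_i,1)$, its cone is $\RR_{\geq0}(A-a_j)=\RR_{\geq0}(P-a'_j)$, the tangent cone of $P$ at the point $a'_j$.

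The key input is the standard fact that for a finitely generated submonoid $S$ of a lattice $N=\ZZ S$, with $\sigma=\RR_{\geq0}S$, the torus orbits of $\Spec\CC[S]$ are in bijection with the faces of $\sigma$, the face $\tau$ corresponding to the orbit of the distinguished point $x_\tau\colon S\to\CC$ sending $s$ to $1$ if $s\in\tau$ and to $0$ otherwise. I would prove this as follows. Closed points of $\Spec\CC[S]$ are monoid homomorphisms $\chi\colon S\to(\CC,\cdot)$, the torus $\Spec\CC[N]$ acts by $(t\chi)(s)=t(s)\chi(s)$, and the ``support'' $S_\chi:=\chi^{-1}(\CC^*)$ is a monoid face of $S$ which is a complete invariant of the orbit of $\chi$: if $S_\chi=S_{\chi'}$, extend both to group homomorphisms on $N$ (using divisibility of $\CC^*$) to produce $t$ with $t\chi=\chi'$. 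It then remains to identify the monoid faces of $S$ with the sets $S\cap\tau$, $\tau$ a face of $\sigma$; one inclusion is immediate, and for the rest I would pass to the saturation $\bar S=\sigma\cap N$, whose spectrum is the normalization of $\Spec\CC[S]$, apply the classical orbit--cone correspondence there, and transport along the finite surjective birational morphism $\Spec\CC[\bar S]\to\Spec\CC[S]$. This last point, needed precisely because $X_A$ (hence $X_A\cap U_j$) is in general not normal, is the one genuinely non-formal step; everything else is bookkeeping.

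Next I would interpret this for $X_A\cap U_j$. The faces of the tangent cone $\RR_{\geq0}(P-a'_j)$ are exactly $\RR_{\geq0}(G-a'_j)$ for $G$ a face of $P$ with $a'_j\in G$, and this is a bijection onto such faces --- a standard polytope fact. For such a $G$ the distinguished point of the corresponding orbit has $z_i/z_j$ equal to $1$ or $0$ according to whether $a_i-a_j$ lies in $\RR_{\geq0}(G-a'_j)$, which, by evaluating a supporting linear functional of the face $G$, happens exactly when $a_i\in G$. Hence that distinguished point is $z^G:=(z_i)_i$ with $z_i=1$ for $a_i\in G$ and $z_i=0$ otherwise, and its $T_A$-orbit is $\{(z_i)_i\mid z_i=u^{a_i}\ \text{for}\ a_i\in G,\ z_i=0\ \text{for}\ a_i\notin G,\ u\in(\CC^*)^k\}$, with non-vanishing locus $\{i\mid a_i\in G\}$. (One also sees $z^G\in X_A$ directly: if $m\in\ZZ^k$ is extremized on $P$ exactly along $G$, then $z^G=\lim_{t\to0}(t^{m_1},\dots,t^{m_k})\cdot p_0$, a limit in the closed set $X_A$.)

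Finally I would glue. Every $T_A$-orbit $O$ of $X_A$ meets, hence is contained in, some $X_A\cap U_j$ (take any $j\in I(O)$), so $O=T_A\cdot z^G$ for a face $G$ of $P$, and $G$ is unique since $T_A\cdot z^G$ determines its non-vanishing locus $\{i\mid a_i\in G\}$, and a face of $P$ is the convex hull of the $a'_i$ it contains (the vertices of a face of $P$ being vertices of $P$), so $\{i\mid a_i\in G\}$ determines $G$. Conversely $z^F\in X_A$ for every nonempty face $F$ of $P$ (by the previous paragraph), so $F\mapsto T_A\cdot z^F$ is the inverse assignment, and we get the asserted bijection between nonempty faces of $P$ and $T_A$-orbits. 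The orbit attached to $F$ has the parametrized form stated in the proposition because $u\in(\CC^*)^k$ sends $z^F$ to $(u^{a_i}z_i^F)_i$; and it equals $\{z\in X_A\mid z_i\neq 0\Leftrightarrow a_i\in F\}$ because, by the bijection just established, $I$ separates the orbits. This is exactly the statement of Proposition~\ref{prop:XA}.
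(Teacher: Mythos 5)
Your argument is correct, but it proves far more than the paper does and follows a genuinely different route. The paper treats the orbit--face bijection and the first description of the orbit (in terms of vanishing patterns of coordinates) as known, citing \cite[Proposition 5.1.9, Theorem 5.3.1]{GKZbook}, and only supplies the short missing step: the second equality. For that it exhibits the distinguished point $z_c$ (coordinates $1$ on $F$, $0$ off $F$) as the limit $\lim_{t\to 0}y_t\cdot(1:\dots:1)$ along a one-parameter subgroup chosen via a supporting functional of $F$, and then writes an arbitrary point of the orbit as $u\cdot z_c$. You instead reprove the whole proposition from scratch: a chart-by-chart reduction to affine semigroup algebras $\CC[\NN(A-a_j)]$, an orbit--cone correspondence for possibly non-normal affine toric varieties (classifying points by the monoid face $\chi^{-1}(\CC^*)$ and transporting the classical correspondence along the normalization), the identification of faces of the tangent cone $\RR_{\geq 0}(P-a'_j)$ with faces of $P$ through $a'_j$, and a gluing step using that the non-vanishing locus is an orbit invariant and that a face of $P$ is the convex hull of the $a'_i$ it contains. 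All of these steps check out, including the one you rightly flag as non-formal (monoid faces of a non-saturated semigroup versus faces of its cone). What your approach buys is self-containedness and an explicit handling of non-normality; what it costs is length and reliance on a somewhat delicate semigroup lemma, whereas the paper's two-line limit argument, combined with the citation, is all that is actually needed here. Note also that your limit computation for $z^G$ is essentially the paper's argument, embedded in a much larger framework.
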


\begin{proof}
The first two claims are proved in loc. cit. 
  As the last claim does not seem to appear explicitly in the
  given reference, we provide a proof here, assuming the other claims. We first claim that the point
  $z_c:=(z_1:\dots:z_d)$
  with $z_i=1$ if $a_i\in F$ and $z_i=0$ if $a_i\not \in F$ is contained in $O_F$.  We first prove $z_c\in X_A$.
 Let $\lambda\in \ZZ^k$, $c\in \ZZ$ be
  such that $\langle\lambda,a_i\rangle>c$ if $a_i\not\in F$ and
  $\langle\lambda,a_i\rangle=c$ if $a_i\in F$ (such $\lambda$ exists because $F$ is a face of the convex hull of $A$). Let $(y_t)_t$ be the
  $1$-parameter subgroup of $(\CC^*)^k$ defined by~$\lambda$,
  i.e. $y_t=(t^{\lambda_1},\dots,t^{\lambda_k})$. Put $z_0:=(1:\dots:1) \in X_A$. Then
  $\lim_{t\to 0} y_t\cdot z_0=\lim_{t\to 0} t^{-c}y_t\cdot z_0=\lim_{t\to 0}(t^{\langle \lambda,a_1\rangle-c}:\dots:t^{\langle\lambda,a_d\rangle-c})=z_c$ implies $z_c\in X_A$.
Now the description of $O_F$ given in the statement of the proposition implies $z_c\in O_F$.

  If $z$ is an arbitrary point in $O_F$ then since $z_c\in O_F$, there exists   $u\in (\CC^{\ast})^k$
such that $z=u\cdot z_c=(u^{a_1}(z_c)_1:\cdots :u^{a_k} (z_c)_d)$.
  Then $z_i=u^{a_i}$ if
    $a_i\in F$ and $z_i=0$ if $a_i\not\in F$, as asserted in the second equality.
  \end{proof}
  We also recall the following description of $X_A$.

\begin{proposition}\cite[Theorem 5.2.3]{GKZbook}\label{prop:XA1} Assume $\forall i:(a_i)_k=1$. Let $R=\CC[(x^{a_i})_i]\subset \CC[(x_i^{\pm 1})_i]$. We consider $R$ as an $\NN$-graded ring via the $x_k$-degree. In particular
  $R$ is generated by the elements of degree one given by the
  $(x^{a_i})_i$. Then $\Spec R$ is the cone over $X_A$ (and hence also $X_A=\Proj R$). In other words,
  the closed points
  $\delta:R\r \CC$ of $\Spec R$, not corresponding to the graded maximal ideal of
  $R$, are given by $x^{a_i}\mapsto w_i$ where
  $(w_1:\cdots:w_d)$ is a closed point in~$X_A$.
\end{proposition}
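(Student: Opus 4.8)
The plan is to identify $R$, graded by the $x_k$-degree, with the homogeneous coordinate ring of $X_A\subset\PP^{d-1}$, and then to invoke the standard dictionary relating a projective variety cut out by a homogeneous ideal, its $\Proj$, and its affine cone. Consider the graded $\CC$-algebra homomorphism
\[
\pi\colon \CC[z_1,\dots,z_d]\r\CC[(x_i)_i],\qquad z_i\mapsto x^{a_i},
\]
where $\CC[z_1,\dots,z_d]$ carries the standard grading with all $z_i$ in degree $1$ and the target is graded by the $x_k$-degree; $\pi$ is a graded homomorphism because $(a_i)_k=1$ for all $i$. By construction $R=\im\pi$, it is generated by its degree-one part $R_1=\sum_i\CC\,x^{a_i}$, and $R_0=\CC$, so $R$ has a unique graded maximal ideal $R_{>0}$. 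Write $\phi\colon(\CC^*)^k\r\PP^{d-1}$, $x\mapsto(x^{a_1}:\cdots:x^{a_d})$; this is a morphism since the monomials $x^{a_i}$ are invertible on the torus, and by definition $X_A=\overline{\phi((\CC^*)^k)}$.

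The heart of the matter is the equality $I(X_A)=\ker\pi$ of homogeneous ideals of $\CC[z_1,\dots,z_d]$, where $I(X_A)$ is the homogeneous vanishing ideal of $X_A$. Since $\pi$ is graded, $\ker\pi$ is homogeneous, so it suffices to compare homogeneous elements: a homogeneous $f$ lies in $I(X_A)$ iff it vanishes on the dense subset $\phi((\CC^*)^k)$, i.e.\ iff $f(x^{a_1},\dots,x^{a_d})=0$ in $\CC[(x_i)_i]$, and since an element of $\CC[(x_i)_i]$ whose restriction to $(\CC^*)^k$ vanishes identically is zero, this says precisely $f\in\ker\pi$. Hence the homogeneous coordinate ring of $X_A$ is $\CC[z_1,\dots,z_d]/\ker\pi\cong\im\pi=R$ as graded rings; in particular $\ker\pi$ is radical, consistently with $R$ being a domain.

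It remains to extract the geometric statements. Because $R$ is generated in degree one there is no Veronese subtlety: the surjection $\CC[z_1,\dots,z_d]\twoheadrightarrow R$ induces a closed embedding $\Proj R\hookrightarrow\Proj\CC[z_1,\dots,z_d]=\PP^{d-1}$ whose image is cut out by $\ker\pi=I(X_A)$, hence equals $X_A$; thus $X_A=\Proj R$, and $\Spec R=\Spec(\CC[z_1,\dots,z_d]/\ker\pi)$ is exactly the affine cone over $X_A$ inside $\CC^d=\Spec\CC[z_1,\dots,z_d]$. For the last assertion, a closed point $\delta\colon R\r\CC$ is determined by the scalars $w_i:=\delta(x^{a_i})$ (the $x^{a_i}$ generate $R$); it corresponds to the graded maximal ideal $R_{>0}$ exactly when all $w_i$ vanish, and otherwise $(w_1:\cdots:w_d)$ is a well-defined point of $\PP^{d-1}$ lying on the affine cone, hence on $X_A=\Proj R$, while conversely every closed point of $X_A$ arises this way, from a whole $\CC^*$-orbit of such $\delta$. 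I expect the only step needing care to be the identification $I(X_A)=\ker\pi$, i.e.\ that the Zariski closure of the image of a monomial map is cut out exactly by the associated toric ideal; its two ingredients are the tautological density of $\phi((\CC^*)^k)$ in $X_A$ and the fact that an element of $\CC[(x_i)_i]$ restricting to $0$ on $(\CC^*)^k$ is $0$, after which everything else is routine cone/$\Proj$ bookkeeping. Alternatively one can simply cite \cite[Theorem 5.2.3]{GKZbook}.
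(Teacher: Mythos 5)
Your proof is correct. The paper itself offers no proof of this proposition---it is simply quoted from \cite[Theorem 5.2.3]{GKZbook}---so there is nothing to compare against beyond the citation you also give at the end. Your self-contained argument (identifying $\ker\pi$ with the homogeneous vanishing ideal of $X_A$ via density of the torus image and the fact that a Laurent polynomial vanishing on $(\CC^*)^k$ is zero, then applying the standard cone/$\Proj$ dictionary for a graded ring generated in degree one with $R_0=\CC$) is the standard one and is sound; the only cosmetic point is that when some $a_i$ has a negative entry the monomials $x^{a_i}$ live in the Laurent polynomial ring rather than in $\CC[(x_i)_i]$, which changes nothing in the argument.
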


\section{Proof of Proposition \ref{lem:VEA=VA}}
\label{sec:proof}
 By \cite[Proposition 9.1.4]{GKZbook} discriminants are invariant under injective affine transformations $\ZZ^k\r \ZZ^{k'}$. 
Hence by first replacing $\ZZ^k$ by the affine span of $A$, and subsequently embedding $\ZZ^k$ in $\ZZ^{k+1}$ as $\ZZ^{k}\times \{1\}$
it follows that we can restrict to the following situation: 
\begin{itemize}
\item the last coordinates of the $(a_i)_i$ are all $1$;
\item the $a_i$ linearly span $\RR^k$.
\end{itemize}
\emph{We will make these assumptions in this section.}
\subsection{Discriminant locus and finiteness}
We give a characterisation of the discriminant locus $V(A)\subset \CC^A$ in terms of finiteness of certain maps.
\begin{lemma}\label{lem:folklore}
For $\alpha\in \CC^A$ set $f_\alpha=\sum_{i=1}^d \alpha_i x^{a_i}$. 
Let $R=\CC[(x^{a_i})_i]$ (like in Proposition \ref{prop:XA1}). Then $\alpha\not\in V(A)$ if and only if $\gamma:\CC[y_1,\dots,y_k]\to R$, $y_i\mapsto x_i\partial_i f_{\alpha}$ 
makes~$R$ into a finitely generated $\CC[y_1,\ldots,y_k]$-module.
\end{lemma}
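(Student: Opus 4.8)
The plan is to relate both the discriminant condition and the finiteness condition to the geometry of the toric variety $X_A$ (equivalently, its affine cone $\Spec R$), using the homogeneity of the situation with respect to the $x_k$-grading. The key observation is that the elements $x_i\partial_i f = \sum_{j=1}^d (a_j)_i\,\alpha_j x^{a_j}$ are all homogeneous of degree $1$ in $R$ for the $x_k$-grading, and together with the relation $\sum_i (a_j)_i = ?$ — more precisely, since $(a_j)_k = 1$ for all $j$, we have $x_k\partial_k f = \sum_j \alpha_j x^{a_j} = f$ itself, so the Euler-type identity $\sum_{i=1}^k x_i\partial_i f = \big(\sum \text{(something)}\big)$ is available. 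The map $\gamma$ is a graded homomorphism from the polynomial ring $\CC[y_1,\dots,y_k]$ (with each $y_i$ in degree $1$) to $R$.

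\textbf{Step 1: Reformulate finiteness via $\Proj$.}
Since $\gamma$ is a graded map between $\NN$-graded rings generated in degree $1$, and $R$ is a finitely generated $\CC$-algebra with $\Proj R = X_A$, finiteness of $\gamma$ is controlled by the behavior ``at infinity'': by the graded Nakayama/Noether-normalization picture, $\gamma$ is finite if and only if the only common zero in $\Spec R$ of the elements $(x_i\partial_i f)_{i=1,\dots,k}$ is the origin (the graded maximal ideal), i.e.\ if and only if these $k$ elements have no common zero on $X_A = \Proj R$. Concretely, $\gamma$ fails to be finite exactly when there is a closed point $(w_1:\cdots:w_d)\in X_A$ at which $\sum_{j=1}^d (a_j)_i w_j = 0$ for all $i=1,\dots,k$.

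\textbf{Step 2: Match with the discriminant locus.}
Now I use Proposition~\ref{prop:XA}: a closed point of $X_A$ lies in the orbit corresponding to a face $F$ of $P$, and such a point has the form $z_i = u^{a_i}$ for $i\in F$, $z_i = 0$ for $i\notin F$, with $u\in(\CC^*)^k$. Substituting into the vanishing condition from Step 1 gives $\sum_{j\in F}(a_j)_i\,\alpha_j u^{a_j} = 0$ for all $i$, which upon setting $x^{a_j} = u^{a_j}$ says precisely that $u\in(\CC^*)^k$ is a critical point of $f^F = \sum_{j\in F}\alpha_j x^{a_j}$ on the torus — i.e.\ that $\alpha|_F \in \nabla_F$, i.e.\ $\alpha \in p_F^{-1}(\nabla_F)$. (One should be slightly careful: $f^F$ only involves the variables appearing in the $a_j$ with $j\in F$, but the critical-point condition for $\nabla_F$ is on the torus of the appropriate dimension; the face $F$ lies in an affine subspace and the ``missing'' partial derivative directions are handled by the fact that $(a_j)_k=1$ for all $j$, which forces the corresponding Euler relation and makes the $k$-th equation redundant with the others when restricted to $F$.) Ranging over all faces $F$ shows that $\gamma$ is \emph{not} finite iff $\alpha \in \bigcup_F p_F^{-1}(\nabla_F) = V(A)$, which is the claim.

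\textbf{The main obstacle} I anticipate is Step 1 — making rigorous the passage from ``finiteness of the graded ring map $\gamma$'' to ``no common projective zeros of the $x_i\partial_i f$ on $X_A$''. This requires knowing that $R$ is generated in degree $1$ over $\CC$ (given by Proposition~\ref{prop:XA1}) and a version of the graded Noether normalization / projective Nullstellensatz statement: for a graded ring $R$ finitely generated in degree one and a graded subalgebra image, finiteness is equivalent to the irrelevant ideal of $R$ being nilpotent modulo the extension — equivalently to the vanishing locus in $\Proj R$ being empty. A secondary subtlety is the bookkeeping in Step 2 around which partial derivatives enter: because all $a_i$ have last coordinate $1$, one of the $k$ equations $x_i\partial_i f = 0$ is (on each face, after the torus substitution) a consequence of the Euler relation $\sum_i$ applied appropriately, so the count of equations matches the definition of $\nabla_F$ via the $k$ partials $(\partial_i f^F)$; I would state this homogeneity reduction carefully rather than leave it implicit.
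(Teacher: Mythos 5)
Your proposal is correct and follows essentially the same route as the paper: finiteness of the graded map $\gamma$ is reduced via graded Nakayama to the statement that the common zero locus of the $(x_i\partial_i f)_i$ in $\Spec R$ is only the vertex of the cone, and the orbit description of $X_A$ (Propositions \ref{prop:XA} and \ref{prop:XA1}) translates a nontrivial common zero into a critical point of $f^F$ on $(\CC^*)^k$ for some face $F$, and conversely. The Euler-relation worry in your Step 2 is moot: $f^F$ is a Laurent polynomial in all $k$ variables $x_1,\dots,x_k$ and $\nabla_F$ is defined by the vanishing of all $k$ partials $\partial_i f^F$ on the torus, so the substitution matches the definition directly, with no ``missing directions'' or redundant equation to account for.
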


\begin{proof}
   The map $\gamma$ is graded if we put $\deg y_i=1$. Put $\bar{R}=R/(\gamma(y_1),\ldots,\gamma(y_k))$.
  Then $\gamma$ is finite if and only if $\bar{R}$ is finite dimensional, by the graded version of Nakayama's lemma (see Lemma \ref{lem:fg1} below).
  
  Assume first that $\gamma$ is not finite so that $\bar{R}$ is infinite dimensional and hence $\Spec \bar{R}\neq \{\bar{m}\}$, where $\bar{m}$ is the
  graded maximal ideal of $\bar{R}$.
  Let $\delta:R\r \CC$ be a  ring homomorphism representing a closed point in $\Spec \bar{R}$, different from  $\bar{m}$.
  By construction $(\delta\gamma)(y_i)=0$, $1\leq i\leq k$. In other words $\delta(x_i \partial_i f_{\alpha})=0$ for $1\leq i\leq k$.

Moreover, by Proposition \ref{prop:XA1}, $w:=(\delta(x^{a_1}):\cdots:\delta(x^{a_d}))\in X_A$.
  Let $F$ be the convex hull of $\{a_i\mid \delta(x^{a_i})\neq 0\}$. 
By Proposition \ref{prop:XA}, $F$ is a face of $P$ and 
there exists $u\in (\CC^*)^k$ such that $u^{a_i}=\delta(x^{a_i})$ for $a_i\in F$. By definition of $F$ and $f_\alpha^F$:  $\delta(x_i \partial_i f_{\alpha}^F)=0$  for $1\leq i\leq k$.
Then
$u_i(\partial_i f_{\alpha}^F)(u)=(x_i\partial_i f_{\alpha}^F)(u)=\delta(x_i\partial_i f_{\alpha}^F)=0$ for $i=1,\ldots,k$. Since $u_i\neq 0$ we obtain $(\partial_i f_{\alpha}^F)(u)=0$. Applying this for $i=k$ 
(taking into account that $a_{i,k}=1$ for all $i$) we also get $f_{\alpha}^F(u)=0$. 
Thus, $\alpha\in p^{-1}_F(\nabla_F)\subset V(A)$. 

On the other hand, if $\alpha\in V(A)=\bigcup_F
p_F^{-1}(\nabla_F)$
then there exists a face $F$ of $P$, $u\in (\CC^*)^k$ such that
$(\partial_i f_{\alpha}^F)(u)=0$ for $i=1,\ldots,k$.
By Proposition \ref{prop:XA}  the point~$w$ with homogeneous coordinates $w_i=u^{a_i}$ if $a_i\in F$ and $w_i=0$ otherwise is contained in $X_A$.
 The point with the same (affine) coordinates in the cone over $X_A$ corresponds  by Proposition \ref{prop:XA1} 
to a ring homomorphism $\delta:R\to \CC$ which sends $x^{a_i}$ to $w_i$ for $i=1,\ldots,d$, i.e. $x^{a_i}$  
  to $u^{a_i}$ 
if $a_i\in F$ and to $0$ otherwise.
It follows  that $(\delta\gamma)(y_i)=0$, $1\leq i\leq k$. 
On the other hand $(w_i)_i\neq (0,\ldots,0)$ since $(w_i)_i$ corresponds to a point in $\PP^{d-1}$.
Hence $\Spec \bar{R}$ has a closed point $\ker \delta$ different from $\bar{m}$ and hence $\bar{R}$ must be infinite
dimensional.
\end{proof}
We now introduce some notations. We let $S_0=\CC[(\alpha_i)_i]$ (so here we view the $(\alpha_i)_i$ as variables, whereas before they were scalars), $S=S_0[y_1,\dots,y_k]$,
$T=S_0[(x^{a_i})_i]\subset S_0[(x_i^{\pm 1})_i]$. We will consider the ring homomorphism
$\gamma: S\r T:y_i\mapsto x_i \partial_i f$.
\begin{corollary}\label{cor:folkolore}
We have $V(A)=\overline{V(A)}$.
\end{corollary}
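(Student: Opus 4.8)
The plan is to deduce Corollary~\ref{cor:folkolore} from Lemma~\ref{lem:folklore} together with the relative (parametrized) version of the finiteness criterion and an openness statement for the locus of finiteness. Concretely, I would reinterpret Lemma~\ref{lem:folklore} as saying that the complement $\CC^d\setminus V(A)$ is exactly the set of parameters $\alpha$ for which the fibre map $\gamma_\alpha\colon \Spec R_\alpha\to\Spec\CC[y_1,\dots,y_k]$ (where $R=\CC[(x^{a_i})_i]$ and $\gamma(y_i)=x_i\partial_i f$, now with the $\alpha_i$ treated as coordinates rather than scalars) is finite. So the statement $V(A)=\overline{V(A)}$ is equivalent to the assertion that this finiteness locus is \emph{open} in $\CC^d$.

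First I would set up the family over $\CC^d$: let $S_0 = \CC[(\alpha_i)_i]$, let $R_{\mathrm{univ}} = S_0[(x^{a_i})_i]\subset S_0[(x_i)_i]$ graded by $x_k$-degree, and let $\gamma\colon S_0[y_1,\dots,y_k]\to R_{\mathrm{univ}}$, $y_i\mapsto x_i\partial_i f$, which is graded with $\deg y_i = 1$. Put $\bar R_{\mathrm{univ}} = R_{\mathrm{univ}}/(\im\gamma)$, a graded $S_0$-algebra with $\bar R_{\mathrm{univ},0} = S_0$ and each graded piece $\bar R_{\mathrm{univ},m}$ a finitely generated $S_0$-module (since $R_{\mathrm{univ}}$ is Noetherian and graded-finite over $S_0$). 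By the graded Nakayama argument exactly as in the proof of Lemma~\ref{lem:folklore}, for a point $\alpha\in\CC^d$ the fibre $\gamma_\alpha$ is finite iff $\bar R_{\mathrm{univ}}\otimes_{S_0}\CC(\alpha)$ is finite-dimensional, iff all but finitely many of the $\CC$-vector spaces $\bar R_{\mathrm{univ},m}\otimes_{S_0}\CC(\alpha)$ vanish, iff there exists $N$ with $\bar R_{\mathrm{univ},m}\otimes_{S_0}\CC(\alpha)=0$ for all $m\geq N$. Then the key point is that $\CC^d\setminus V(A)$, being the union over $N$ of the loci $\{\alpha\mid \bar R_{\mathrm{univ},m}\otimes\CC(\alpha)=0,\ \forall m\geq N\}$, must actually equal a single such locus for large enough $N$; this uses generic flatness / Noetherianity of $S_0$ to get a uniform bound on the regularity of $\bar R_{\mathrm{univ}}$ over a dense open, combined with the fact that the support of each finitely generated module $\bar R_{\mathrm{univ},m}$ is closed. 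In fact I expect the cleanest route is: by generic flatness there is a dense open $W\subseteq\CC^d$ over which $\bar R_{\mathrm{univ}}$ is flat, hence $\bar R_{\mathrm{univ}}\otimes\CC(\alpha)$ has Hilbert function independent of $\alpha\in W$; since $\CC^d\setminus V(A)$ is a constructible set (the finiteness locus of a morphism of finite type — or one can argue by upper-semicontinuity of fibre dimension) whose intersection with every irreducible subvariety is either empty or dense there, being the non-vanishing locus of the relevant dimension function, it is open.

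Alternatively — and this is probably the argument the authors intend given the sketch in Section~\ref{sec:proof} — one shows directly that the finiteness locus $U$ is open by a standard spreading-out argument: if $\gamma_{\alpha_0}$ is finite, then $\bar R_{\mathrm{univ}}$ localized at $\alpha_0$ is, in high degrees, a finitely generated module that vanishes after $\otimes\CC(\alpha_0)$, hence vanishes in a Zariski neighbourhood of $\alpha_0$ by Nakayama; one needs only to control finitely many degrees because the generators of $R_{\mathrm{univ}}$ sit in degree one and $\im\gamma$ contains the degree-one elements $x_i\partial_i f$, so once $\bar R_{\mathrm{univ},m}\otimes\CC(\alpha_0)=0$ for $m$ up to the (bounded, by Noetherianity) Castelnuovo–Mumford regularity of $\bar R_{\mathrm{univ}}$ over the local ring at $\alpha_0$, it vanishes for all larger $m$ too. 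Either way the conclusion is that $U=\CC^d\setminus V(A)$ is open, so $V(A)$ is closed and $V(A)=\overline{V(A)}$.

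The main obstacle is the uniformity/finiteness bound: a priori the fibre $\gamma_\alpha$ could be finite for all $\alpha$ outside $V(A)$ but with the "stabilization degree'' $N(\alpha)$ unbounded, which would not immediately give openness. Resolving this requires invoking that $S_0$ is Noetherian so that the graded module $\bar R_{\mathrm{univ}}$ has a finite free resolution over $S_0[y_1,\dots,y_k]$, whence its Castelnuovo–Mumford regularity is bounded independently of the fibre — this is precisely what lets one reduce finiteness of $\gamma_\alpha$ to the vanishing of finitely many (and a fixed finite set of) graded components of $\bar R_{\mathrm{univ}}\otimes\CC(\alpha)$, each of which is a closed condition, and the complement of finitely many closed conditions... wait, I need the finiteness locus, which is where these vanish, to be \emph{open}, i.e.\ I want the non-finiteness locus $V(A)$ to be the union of the \emph{supports} of these finitely many finitely generated modules, hence closed. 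That is the crux, and it is exactly where one uses that only finitely many degrees matter.
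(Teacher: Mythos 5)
Your second route is essentially the paper's argument: the paper deduces the corollary by applying Proposition \ref{prop:finite} (openness of the finiteness locus for a graded family) to the family $\gamma\colon S\to T$ over the parameter space, and then identifies the finiteness locus with $\CC^d\setminus V(A)$ via Lemma \ref{lem:folklore}. The engine behind Proposition \ref{prop:finite} is exactly the combination you name at the end: graded Nakayama (Lemma \ref{lem:fg1}) plus the observation that, because $T$ is generated over $T_0$ in degrees $\le k$, only $k$ consecutive graded pieces of $T/S_{>0}T$ control all higher ones (Lemma \ref{lem:supp}), so the relevant loci are supports of finitely generated $S_0$-modules and hence closed and compatible with base change.

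The one place where your write-up goes astray is the ``uniformity obstacle.'' It is not an obstacle, and neither generic flatness, constructibility arguments, nor Castelnuovo--Mumford regularity bounds are needed (the fibrewise regularity claim you sketch would in any case require flatness to justify). The point is that the non-finiteness locus is
\[
F_S(T)\;=\;\bigcap_{N\ge 0}\Supp\nolimits_{S_0}\bigl(T/S_{>0}T\bigr)_{\ge N}
\;=\;\bigcap_{N\ge 0}\Supp\nolimits_{S_0}\bigl(T/S_{>0}T\bigr)_{\ge N,\le N+k-1},
\]
an intersection of closed sets, hence closed with no uniform choice of $N$ required; equivalently, openness is local, so at a point $\alpha_0$ where the fibre is finite you apply Nakayama to the finitely many modules $(T/S_{>0}T)_{N_0},\dots,(T/S_{>0}T)_{N_0+k-1}$ to get vanishing on a neighbourhood, and bounded generation degree propagates this to all degrees $\ge N_0$. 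That each $\Supp_{S_0}(T/S_{>0}T)_{\ge N}$ is detected fibrewise (so that the closed set $F_S(T)$ really is the set of $\alpha$ with non-finite fibre) is the content of Lemmas \ref{lem:fg2}--\ref{lem:fg4}. With that substitution your argument closes up and coincides with the paper's; your first route (generic flatness plus constructibility) as stated does not establish stability under generization and should be discarded.
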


\begin{proof}
  We apply Proposition \ref{prop:finite} with $\gamma:S\r T$ as above and $W=\Spec(S_0/m)$ where $m$ corresponds to a maximal
  ideal in $S_0$ (these are parametrized by the closed points in $\CC^A$).
 It follows that the set of $\alpha\in \CC^A$ for which~$\gamma$, specialized at $\alpha$, is finite is the open set $U\subset \Spec S_0$ as in the statement of  Proposition \ref{prop:finite}. On the other hand, by Lemma \ref{lem:folklore} this set is identified with the complement of $V(A)$. Thus, $V(A)$ is closed, so it equals its closure $\overline{V(A)}$.
\end{proof}

\subsection{Proof of Proposition \ref{lem:VEA=VA}}\label{subsec:proof} 
Put $X=\Spec (S_0)_{E_A}=\CC^A\setminus V(E_A)$, $Y=\Spec S_{E_A}$, $Z=\Spec T_{E_A}$.
Let $U'$ be the open subset $U$ of $X$ from Proposition \ref{prop:finite} below where $S,T$ in the statement of the proposition stand for $S_{E_A}$, $T_{E_A}$ as defined here. By
Lemma \ref{lem:folklore}, $U'=(\CC^A\setminus V(E_A))\setminus V(A)$.
By Corollary \ref{cor:folkolore} we have $V(A)=\overline{V(A)}$. Hence
by \eqref{basic_inclusion} $V(E_A)\subset V(A)$. We conclude  that  $U'=\CC^A\setminus V(A)$.  
Since for all $F$, $\dim \bar{\nabla}_F<|F|$ \cite[Chapter 9, after
  (1.1)]{GKZbook}
we have
  $U'\neq\emptyset$ so that $U'\subset X$ is dense open.

By Proposition \ref{prop:finite}, 
$\gamma_{U'}:Z\times_X U'\r Y\times_X U'$ is finite. As $Y,Z$ are
irreducible and have the same dimension (using our hypothesis in this
section that $\RR A=\RR^k$), this implies that $\gamma_{U'}$ is
dominant.

Hence $\gamma_{U',\ast}\Oscr_{Z\times_X U'}$ is a torsion free (since $Z\times_X U'$ is integral and $\gamma_{U'}$ is dominant), coherent $\Oscr_{Y\times_X U'}$-module. Let $j:Y\times_X U'\r Y$ be the injection.
Since $Y\to X$ is flat and $\codim (X-U',X)\ge 2$ (because of the fact that $V(A)$ and $V(E_A)$ only differ in codimension two) it follows that  $\codim(Y-Y\times_X U',Y)\ge 2$. 
By the Grothendieck-Serre theorem \cite[Th\'eor\`eme 1]{MR212214} $j_\ast\gamma_{U',\ast} \Oscr_{Z\times_X U'}$ is a coherent $\Oscr_{Y}$-module.
Since $\gamma_\ast \Oscr_Z$ is contained
in $j_\ast \gamma_{U',\ast} \Oscr_{Z\times_X U'}$ the former is a coherent $\Oscr_Y$-module as well. Hence $\gamma:Z\r Y$ is finite. 
Hence by the definition of $U'$, in the statement of Proposition \ref{prop:finite}  $U'=X$.

 \appendix
\section{Finite maps and base change}\label{subsec:fg}
In this appendix we show that under some conditions, finiteness is open. This fact has been used very crucially in the proof of Proposition \ref{lem:VEA=VA}.
 Openness does not hold in complete generality. See the second answer in \cite{open} for a (non-noetherian) counter example between affine varieties.

\begin{proposition}\label{prop:finite}
    Let $\gamma:S\r T$ be a homomorphism of $\NN$-graded commutative rings,
    where $T$ is finitely generated as $T_0$-algebra and $T_0$ is a finitely generated $S_0$-module.
    Then $\Spec S_0$ contains an open subset $U$ such that for any $S_0$-scheme $\phi:W\r \Spec S_0$
    we have that 
  $W\times_{S_0} \gamma$ is finite if and only if $\im \phi\subset U$.
\end{proposition}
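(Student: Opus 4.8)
The plan is to reduce the general statement to a Noetherian situation and then invoke standard openness results for finiteness of morphisms. First I would observe that the hypotheses are finitary: $T$ is generated over $T_0$ by finitely many homogeneous elements $t_1,\dots,t_r$, and $T_0$ is generated as an $S_0$-module by finitely many elements. Each $t_j$ satisfies no relation a priori, but the structure map $\gamma$ and the module structure are all encoded by finitely much data over $S_0$. Hence there is a finitely generated $\ZZ$-subalgebra $S_0'\subset S_0$ and graded rings $S'\subset S$, $T'\subset T$ with $T'$ finitely generated over $T_0'$, $T_0'$ a finite $S_0'$-module, $S_0'$ Noetherian, such that $S=S_0\otimes_{S_0'}S'$ and $T=S_0\otimes_{S_0'}T'$ (after possibly enlarging $S_0'$ to absorb finitely many relations). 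The formation of ``the locus where $\gamma$ is finite'' should be compatible with the base change $\Spec S_0\to\Spec S_0'$, so it suffices to treat the Noetherian case.

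In the Noetherian case, I would argue as follows. Consider $\bar T:=T/(\operatorname{im}\gamma)_{>0}T$, i.e.\ the quotient of $T$ by the ideal generated by the images of the positive-degree part of $S$. By the graded Nakayama lemma (Lemma~\ref{lem:fg1} in the paper), for a point $\alpha\in\Spec S_0$ the specialized map $\gamma_\alpha$ is finite precisely when $\bar T\otimes_{S_0}\kappa(\alpha)$ is a finite-dimensional $\kappa(\alpha)$-vector space; more generally, for an $S_0$-scheme $W$, the base change $W\times_{S_0}\gamma$ is finite iff $\bar T\otimes_{S_0}\Oscr_W$ is a coherent $\Oscr_W$-module supported in degree $\le N$ for some $N$, which happens iff this holds fibrewise over the image of $W$. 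Now $\bar T$ is a finitely generated graded $S_0$-algebra (it is a quotient of $T$), and $\bar T_0=T_0$ is a finite $S_0$-module; write $\bar T=\bigoplus_{m\ge 0}\bar T_m$ with each $\bar T_m$ a finite $S_0$-module. The key point is that the function $\alpha\mapsto \dim_{\kappa(\alpha)}(\bar T\otimes\kappa(\alpha))$ is finite exactly on an open set. One clean way: the $S_0$-scheme $\operatorname{Proj}\bar T\to\Spec S_0$ is proper, and $\gamma_\alpha$ is finite iff the fibre $(\operatorname{Proj}\bar T)_\alpha$ is empty iff $\alpha$ avoids the image of $\operatorname{Proj}\bar T$, which is closed by properness. (A small subtlety: ``$\bar T$ finite-dimensional'' must be distinguished from ``$\operatorname{Proj}\bar T=\varnothing$''; but the difference is a nilpotent graded ideal contained in finitely many degrees, and one passes to $\bar T_{\mathrm{red}}$ or truncates, which only shrinks things by a closed set in base—this needs a short check that the relevant degrees are bounded uniformly, which follows from generic flatness applied to the finitely many generators.) Then $U$ is the complement of $\operatorname{image}(\operatorname{Proj}\bar T)$, and the ``if and only if'' for a general $W$ follows because fibres of $W\times_{S_0}\operatorname{Proj}\bar T\to W$ are the pullbacks of the fibres of $\operatorname{Proj}\bar T\to\Spec S_0$.

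The main obstacle, I expect, is the passage between the two conditions ``$\bar T$ is a finite $\kappa(\alpha)$-module'' and ``$\operatorname{Proj}\bar T$ has empty fibre'', i.e.\ controlling the degrees in which $\bar T$ can be non-zero in a way that is uniform over a (Noetherian) base. The point is that $\operatorname{Proj}\bar T=\varnothing$ already implies $\bar T_m=0$ for $m\gg 0$ locally on the base, but one wants this $m_0$ to be locally constant (or at least locally bounded) so that finiteness over $W$ is genuinely a condition only on $\operatorname{image}\phi$. This is where generic flatness / Noetherian induction enters: on a dense open $\Spec S_0$ is stratified so that each $\bar T_m$ is locally free and the $m$ with $\bar T_m\neq 0$ form the same set over each stratum, reducing to a field; over a field the statement is classical. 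I would carry out the details by a short Noetherian induction on $\Spec S_0$, using properness of $\operatorname{Proj}$ for the openness of $U$ and generic flatness for the uniformity needed to deduce the statement for arbitrary $W$.

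Finally, I would note that once the Noetherian case is done, descent along $S_0'\hookrightarrow S_0$ is routine: the open $U'\subset\Spec S_0'$ pulls back to the desired $U\subset\Spec S_0$, and any $S_0$-scheme $W$ is in particular an $S_0'$-scheme, so the criterion transfers verbatim since finiteness of a morphism is insensitive to the base ring and $W\times_{S_0}\gamma=W\times_{S_0'}\gamma'$.
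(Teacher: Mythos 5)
Your Noetherian-case argument is essentially sound and in fact parallels the paper's proof in spirit: both reduce finiteness to right-boundedness of $\bar T=T/S_{>0}T$ via graded Nakayama (Lemma~\ref{lem:fg1}), and both locate the obstruction in the closed sets $\Supp_{S_0}\bar T_{\ge N}$ (your $\operatorname{image}(\Proj\bar T)$ is exactly $\bigcap_N\Supp_{S_0}\bar T_{\ge N}$). The uniformity issue you correctly flag -- upgrading a pointwise degree bound to one valid over all of $W$ -- is resolved in the Noetherian case by noting that the decreasing chain of closed sets $\Supp_{S_0}\bar T_{\ge N}$ stabilizes, so some $\bar T_{\ge N_0}$ already vanishes on all of $U$; generic flatness is not needed.

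The genuine gap is the reduction to the Noetherian case. First, the justification offered -- that ``the formation of the locus where $\gamma$ is finite is compatible with base change along $\Spec S_0\to\Spec S_0'$'' -- is precisely the content of the proposition (with $W=\Spec S_0$ over the smaller base), so invoking it there is circular. Second, and more seriously, the spreading-out itself is unavailable under the stated hypotheses: $S$ carries no finiteness assumption whatsoever, and although $T$ is a finitely generated $S_0$-algebra it need not be finitely \emph{presented}, so neither $T$ nor the ideal $\gamma(S_{>0})T$ (whose graded pieces are images of the possibly huge modules $S_m\otimes T_{n-m}$) descends to a finitely generated $\ZZ$-subalgebra $S_0'\subset S_0$ with $T=S_0\otimes_{S_0'}T'$. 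Since the appendix explicitly targets the non-Noetherian setting (openness of finiteness genuinely fails there in general), this cannot be waved away, although for the application in the body of the paper ($S_0=\CC[(\alpha_i)_i]_{E_A}$) the Noetherian case would suffice. The paper's route avoids Noetherianity entirely: Lemma~\ref{lem:supp} shows $\Supp_{S_0}\bar T_{\ge N}=\Supp_{S_0}\bar T_{\ge N,\le N+k-1}$, the support of an honestly finitely generated $S_0$-module, which is therefore closed and commutes with arbitrary base change (Lemma~\ref{lem_auxiliary}); quasi-compactness of $\Spec$ then replaces the stabilization of the descending chain in extracting the uniform bound (Corollary~\ref{cor:fg1}). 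If you replace your reduction step by this observation, your argument closes up.
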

The proof is given below after some lemmas. To reduce verbosity, in the rest of this appendix all rings are commutative.
We use indices to indicate degree restrictions; e.g. if $M$ is $\ZZ$-graded then $M_{[a,b]}$ denotes subset of $M$ whose elements' homogeneous components have degrees in the interval $[a,b]$.
\begin{lemma}{\cite[Proposition 2.2]{ATVdB}}\label{lem:fg1} 
Let $S$ be an $\NN$-graded ring. Let $M$ be a $\ZZ$-graded $S$-module with a left-bounded grading, i.e. $M_{-N}=0$ for $N\gg 0$. Assume that $M_n$ is finitely generated $S_0$-module for every $n\in \ZZ$.
Then  $M$ is finitely generated $S$-module if and only if $M/S_{>0}M$ is right bounded. 
\end{lemma}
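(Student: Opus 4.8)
The plan is to prove this by a graded version of Nakayama's lemma, using the nonnegative grading on $S$ in an essential way. Throughout write $\bar M = M/S_{>0}M$, which is naturally a $\ZZ$-graded module over $S_0 = S/S_{>0}$.

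\emph{The easy direction} ($M$ finitely generated $\Rightarrow$ $\bar M$ right bounded): choose finitely many homogeneous $S$-module generators $m_1,\dots,m_r$ of $M$, of degrees $d_1,\dots,d_r$. Their images generate $\bar M$ over $S_0$, and $S_0$ sits in degree $0$, so $\bar M_n = 0$ for $n > \max_i d_i$; i.e.\ $\bar M$ is right bounded.

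\emph{The substantive direction} ($\bar M$ right bounded $\Rightarrow$ $M$ finitely generated): first observe that $\bar M$ is left bounded because $M$ is, and that each $\bar M_n$ is a quotient of the finitely generated $S_0$-module $M_n$, hence finitely generated over $S_0$. A module which is a \emph{finite} direct sum of finitely generated modules is finitely generated, so $\bar M$ is a finitely generated $S_0$-module; pick homogeneous elements $m_1,\dots,m_r \in M$ whose images generate it over $S_0$. Set $M' = \sum_{i=1}^r S m_i \subseteq M$; since $M'$ is visibly finitely generated over $S$, it suffices to prove $M' = M$. Let $N = M/M'$, a $\ZZ$-graded, left-bounded $S$-module. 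The surjection $M \twoheadrightarrow N$ induces a surjection of $S_0$-modules $\bar M \twoheadrightarrow N/S_{>0}N$; this map sends the class of each $m_i$ to $0$ (as $m_i \in M'$), and since those classes generate $\bar M$ we conclude $N/S_{>0}N = 0$, i.e.\ $N = S_{>0}N$. If $N$ were nonzero, left-boundedness would provide a least degree $n_0$ with $N_{n_0} \neq 0$; but then $N_{n_0} = (S_{>0}N)_{n_0} = \sum_{i>0} S_i N_{n_0 - i} = 0$ since every $N_{n_0-i}$ with $i>0$ vanishes by minimality of $n_0$ — a contradiction. Hence $N = 0$ and $M = M'$ is finitely generated over $S$.

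I do not expect any real obstacle: this is a routine graded Nakayama argument. The only two points that need genuine care are, first, the verification that $\bar M$ is a finitely generated $S_0$-module — this is precisely where all three hypotheses (left-boundedness of $M$, right-boundedness of $\bar M$, and finite generation of each $M_n$ over $S_0$) are simultaneously used — and second, the minimal-degree step for $N$, which is the graded substitute for the classical Nakayama lemma and depends on $S$ being $\NN$-graded so that $S_{>0}$ strictly raises degree.
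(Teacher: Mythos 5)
Your proof is correct and follows the same route as the paper, which merely sketches this standard graded Nakayama argument (citing \cite[Proposition 2.2]{ATVdB}); you have simply written out the details the paper leaves implicit. Both the reduction to finite generation of $M/S_{>0}M$ over $S_0$ and the minimal-degree contradiction are exactly the intended argument.
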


\begin{proof}
If $M$ is finitely generated $S$-module, then $M/S_{>0}M$ is always right bounded, while the converse follows by the assumption that $M$ is left-bounded and that $M_n$ is finitely generated $S_0$-module for every $n\in \ZZ$.
\end{proof}
Recall that if $N$ is an $R$-module then the \emph{support} $\Supp_R N$ of $N$ is the set of $P\in \Spec R$ such that $N_P\neq 0$. If $N$ is finitely generated then
$\Supp_R N=V(\Ann_R N)$ and hence in particular $\Supp N$ is closed (see \cite[Lemmas 29.5.3, 29.5.4]{stacks-project}).

For $S$, $M$ as in Lemma \ref{lem:fg1} put $F_S(M)=\bigcap_{N\ge 0} \overline{\Supp_{S_0}(M/S_{>0} M)_{\ge N}}$. Note that $F_S(M)$ is an intersection of closed
sets and hence is itself closed.

\begin{corollary} \label{cor:fg1} Let $S$, $M$ be as in Lemma \ref{lem:fg1}. Then
$M$ is finitely generated $S$-module if and only if $F_S(M)=\emptyset$.
\end{corollary}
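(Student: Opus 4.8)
The plan is to unwind the definition of $F_S(M)$ and combine it with Lemma \ref{lem:fg1}. By that lemma, $M$ is finitely generated as an $S$-module if and only if $M/S_{>0}M$ is right bounded, i.e.\ if and only if there exists $N_0$ such that $(M/S_{>0}M)_n = 0$ for all $n \geq N_0$. So it suffices to show that this right-boundedness is equivalent to $F_S(M) = \emptyset$.

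First I would note the elementary fact that for a graded module $M' := M/S_{>0}M$ whose graded pieces $M'_n$ are finitely generated $S_0$-modules, one has $\Supp_{S_0}(M'_{\geq N}) = \bigcup_{n \geq N} \Supp_{S_0}(M'_n)$, and in particular $M'_{\geq N} = 0$ if and only if $\Supp_{S_0}(M'_{\geq N}) = \emptyset$ if and only if $\overline{\Supp_{S_0}(M'_{\geq N})} = \emptyset$. Now the sets $C_N := \overline{\Supp_{S_0}(M'_{\geq N})}$ form a descending chain of closed subsets of $\Spec S_0$ as $N$ increases (since $M'_{\geq N+1} \subseteq M'_{\geq N}$), and $F_S(M) = \bigcap_{N \geq 0} C_N$.

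For the forward direction: if $M$ is finitely generated over $S$, then $M'$ is right bounded, so $C_{N_0} = \emptyset$ for some $N_0$, hence $F_S(M) = \bigcap_N C_N = \emptyset$. For the reverse direction: if $M$ is \emph{not} finitely generated over $S$, then by Lemma \ref{lem:fg1} $M'$ is not right bounded, so $M'_{\geq N} \neq 0$ for every $N$, hence each $C_N$ is a nonempty closed subset. The $C_N$ form a descending chain of nonempty closed subsets of the spectrum of the ring $S_0$; I would invoke quasi-compactness of $\Spec S_0$ (equivalently, that a descending chain of nonempty closed subsets of a spectral space has nonempty intersection — the finite intersection property) to conclude $F_S(M) = \bigcap_N C_N \neq \emptyset$. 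The one point to be slightly careful about is exactly this last step: $S_0$ need not be Noetherian here, so one cannot argue that the chain $C_N$ stabilizes; instead one uses that $\Spec S_0$ is quasi-compact and each $C_N$ is closed and nonempty, so they have the finite intersection property (the chain is nested, so any finite subfamily has nonempty intersection, namely the smallest member), whence the whole intersection is nonempty. This is the main — though still routine — obstacle, and it is precisely the reason the statement is phrased with closures and an intersection rather than with a single support.
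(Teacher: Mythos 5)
Your proof is correct and follows the same route the paper intends: reduce to right-boundedness of $M/S_{>0}M$ via Lemma \ref{lem:fg1}, observe that $C_N=\emptyset$ iff $(M/S_{>0}M)_{\ge N}=0$, and use quasi-compactness of $\Spec S_0$ (the finite intersection property for the nested closed sets $C_N$) for the nontrivial direction. You have simply written out in full the steps the paper compresses into one sentence, including the correct observation that one cannot appeal to Noetherianity of $S_0$ here.
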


\begin{proof}
  The claim follows immediately from Lemma \ref{lem:fg1}, using the definition of $F_S(M)$, and the fact that an affine variety is quasi-compact.
\end{proof}
\begin{lemma} \label{lem:supp} Let $S\r T$ be a morphism between $\NN$-graded algebras such that in addition
  \begin{itemize}
  \item $T_0$ is a finitely generated $S_0$-module.
  \item $T$ is finitely generated in degrees $\le k$ as $T_0$-algebra.
\end{itemize}
Then
\begin{enumerate}
\item $\Supp_{S_0}(T/S_{>0}T)_{\ge N}=\Supp_{S_0}(T/S_{>0}T)_{[N,N+k-1]}$. 
\item $\Supp_{S_0}(T/S_{>0}T)_{\ge N}$ is closed.
  \end{enumerate}
\end{lemma}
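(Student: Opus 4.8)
\textbf{Proof plan for Lemma \ref{lem:supp}.}

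The plan is to reduce both statements to finiteness properties of the graded modules $(T/S_{>0}T)_n$ over $S_0$, exploiting the two hypotheses in a complementary way. First I would observe that, since $T$ is generated as a $T_0$-algebra in degrees $\le k$, the graded piece $T_n$ is an $S_0$-submodule of a finitely generated $T_0$-module (namely the image of $\bigoplus_{0\le m\le n} T_1^{\otimes\cdots}\otimes\cdots$, more cleanly: $T_n = \sum_{m_1+\cdots+m_r=n,\ m_j\le k} T_{m_1}\cdots T_{m_r}$, which is generated over $T_0$ by finitely many products of homogeneous generators). Because $T_0$ is finite over $S_0$, each $T_n$, and hence each quotient $(T/S_{>0}T)_n$, is a finitely generated $S_0$-module; this gives claim (2) immediately from the cited fact that the support of a finitely generated module is closed, once we know (1) reduces the possibly-infinite direct sum $(T/S_{>0}T)_{\ge N}$ to the finitely generated truncation $(T/S_{>0}T)_{\ge N,\le N+k-1}$.

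For claim (1), the key point is that for $n\ge N+k$ the degree-$n$ component $(T/S_{>0}T)_n$ is ``generated from below'' by components of degree between $N$ and $N+k-1$ in a way that is controlled by $S_{>0}$. Concretely, I would argue: since $T$ is generated over $T_0$ in degrees $\le k$, any homogeneous element of $T_n$ with $n\ge k+1$ is an $S_0$-linear combination of products $t\cdot t'$ with $t\in T_m$, $1\le m\le k$, $t'\in T_{n-m}$; iterating, every element of $T_n$ for $n\ge N+k$ lies in $\sum_{N\le m\le N+k-1} T_{n-m}\cdot T_m$. Now modulo $S_{>0}T$, the factor $T_{n-m}$ with $n-m\ge k$... — here one must be a little careful, but the mechanism is that a degree-$(n-m)$ factor itself decomposes, and eventually one reaches a factor in $S_{>0}$, which kills the whole product modulo $S_{>0}T$; what survives is exactly the image of $\bigoplus_{N\le m\le N+k-1}(T/S_{>0}T)_m$ multiplied up by $T$. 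Thus a prime $P$ of $S_0$ is in $\Supp(T/S_{>0}T)_{\ge N}$ iff it is in $\Supp(T/S_{>0}T)_m$ for some $N\le m\le N+k-1$, i.e.\ iff $P\in \Supp(T/S_{>0}T)_{\ge N,\le N+k-1}$, which is the asserted equality.

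I expect the main obstacle to be making the ``multiply up by $T$'' step in (1) precise: one wants to say that $(T/S_{>0}T)_{\ge N}$ is, as an $S_0$-module, a quotient of something like $T\otimes_{T_0}\bigoplus_{N\le m\le N+k-1}(T/S_{>0}T)_m$ (or at least has the same support), and the subtlety is keeping track of which multiplications land back in $S_{>0}T$ versus which genuinely contribute. The cleanest route is probably to localize at an arbitrary prime $P\in\Spec S_0$ and check the equivalence of the two supports degree by degree after localization, since localization is exact and commutes with the grading; then (1) becomes the statement that $(T/S_{>0}T)_{P}$ is ``finitely generated from its degrees $\le N+k-1$ part over the degrees $\ge N$ part'', which is a direct consequence of the degree-$\le k$ generation hypothesis applied over the local ring $(S_0)_P$. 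Once (1) is in hand, (2) is immediate as noted, and $F_S(T)$ from Corollary \ref{cor:fg1} becomes a \emph{finite} intersection of closed sets on each window, setting up the openness statement of Proposition \ref{prop:finite}.
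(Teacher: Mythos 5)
Your plan is correct and is essentially the paper's own argument: the tail $T_{\ge N}$ is generated as a $T$-module by its components in degrees $N,\dots,N+k-1$, which gives a surjection $T\otimes_{T_0}(T/S_{>0}T)_{\ge N,\le N+k-1}\twoheadrightarrow (T/S_{>0}T)_{\ge N}$, and localizing at a prime $P\in\Spec S_0$ yields (1), after which (2) follows since the window is a finitely generated $S_0$-module. The subtlety you flag about "which multiplications land back in $S_{>0}T$" is not actually an obstacle: generation of $T_{\ge N}$ over $T$ by the window passes directly to the quotient $T/S_{>0}T$, which is exactly the surjection above.
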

\begin{proof} We only need to prove (1) since then (2) follows from the fact that the support of a finitely generated module is closed.
  We clearly have
  \[
  \Supp\nolimits_{S_0}(T/S_{>0}T)_{[N,N+k-1]} \subset \Supp\nolimits_{S_0}(T/S_{>0}T)_{\ge N}.
\]
For the opposite inclusion we note that since $T$ is generated in degree $\le k$, $T_{\ge N}$ is generated in degrees $N,\ldots,N+k-1$ as $T$-module and hence the same holds
for $T/S_{>0}T$. Thus there is surjection
\[
T\otimes_{T_0}  (T/S_{>0}T)_{[N,N+k-1]}\twoheadrightarrow (T/S_{>0}T)_{\ge N}.
\]
Hence if $P\in \Spec S_0$ then $(T/S_{>0}T)_{[N,N+k-1],P}=0$ implies $(T/S_{>0}T)_{\ge N,P}=0$ which yields the opposite inclusion.
\end{proof}

\begin{lemma}\label{lem:fg2}
Let $S$ be an $\NN$-graded ring and let $M$ be a $\ZZ$-graded $S$-module.
Let $S_0\to S_0'$ be a morphism of rings. Put $S'=S_0'\otimes_{S_0} S$, $M'=S_0'\otimes_{S_0} M$. Then $M'/S'_{>0}M'=S_0'\otimes_{S_0}(M/S_{>0}M)$.
\end{lemma}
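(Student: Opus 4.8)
The plan is to realize $M/S_{>0}M$ as the cokernel of a canonical $S_0$-linear map and then use that base change along $S_0\to S_0'$ is right exact. Write $S_{>0}=\bigoplus_{n>0}S_n$ as an $S_0$-module and let $\mu\colon S_{>0}\otimes_{S_0}M\to M$ be the multiplication map; its image is exactly the graded submodule $S_{>0}M\subseteq M$, so $M/S_{>0}M=\coker\mu$. Applying the right exact functor $S_0'\otimes_{S_0}-$ to the presentation $S_{>0}\otimes_{S_0}M\xrightarrow{\mu}M\to M/S_{>0}M\to 0$ immediately gives $S_0'\otimes_{S_0}(M/S_{>0}M)=\coker(S_0'\otimes_{S_0}\mu)$.

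Next I would identify $S_0'\otimes_{S_0}\mu$ with the multiplication map $\mu'\colon S'_{>0}\otimes_{S_0'}M'\to M'$ attached to $S'$ and $M'$. Here one uses that the grading on $S'=S_0'\otimes_{S_0}S$ is $S'_n=S_0'\otimes_{S_0}S_n$, so that $S'_{>0}=S_0'\otimes_{S_0}S_{>0}$, together with the base-change identity $(S_0'\otimes_{S_0}A)\otimes_{S_0'}(S_0'\otimes_{S_0}B)\cong S_0'\otimes_{S_0}(A\otimes_{S_0}B)$ for $S_0$-modules $A,B$. Under these identifications one checks that $S_0'\otimes_{S_0}\mu$ becomes $\mu'$, whence
\[
S_0'\otimes_{S_0}(M/S_{>0}M)=\coker\mu'=M'/S'_{>0}M',
\]
which is the assertion. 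Alternatively, the same argument can be carried out degree by degree, starting from $(M/S_{>0}M)_n=\coker\bigl(\bigoplus_{i>0}S_i\otimes_{S_0}M_{n-i}\to M_n\bigr)$ and base changing each such presentation.

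There is no genuine difficulty here; the only point requiring a moment's care is verifying that the two tensor-product identifications above are compatible with the relevant ring and module structures, so that the base change of $\mu$ really \emph{is} the multiplication map for the primed data and not merely an abstractly isomorphic map. This is a routine check that I would leave to the reader.
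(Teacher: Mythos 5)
Your proof is correct and follows essentially the same route as the paper: the paper tensors the right exact sequence $S_{>0}\otimes_{S}M\to M\to M/S_{>0}M\to 0$ with the right exact functor $S_0'\otimes_{S_0}(-)$, which is exactly your argument up to the cosmetic choice of tensoring $S_{>0}$ with $M$ over $S_0$ instead of over $S$. Both presentations have the same cokernel and base change identically well in either case, so there is no substantive difference.
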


\begin{proof}
The proof follows by tensoring the right exact sequence
\[
S_{>0}\otimes_{S} M\to M\to M/S_{>0}M\to 0
\]
with the right exact functor $S_0'\otimes_{S_0}(-)$, together with the observation that $S'_{>0}=S_0'\otimes_{S_0} S_{>0}$.
\end{proof}

We recall a standard lemma, see e.g. \cite[Lemma 29.5.3(3)]{stacks-project}.

\begin{lemma}\label{lem_auxiliary}
  Let $\phi:S_0\to S_0'$ be an extension of rings and let $N$ be a
  finitely generated $S_0$-module. Then
  $\Supp_{S'_0}(S_0'\otimes_{S_0}N)=\phi^{-1}(\Supp_{S_0}(N))$.
\end{lemma}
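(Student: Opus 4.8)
\textbf{Plan of proof for Lemma \ref{lem_auxiliary}.}
The statement is the standard fact that support commutes with base change for finitely generated modules, so the plan is to reduce it to a localization computation. First I would fix a prime $\mathfrak{q}\in\Spec S_0'$ lying over $\mathfrak{p}=\phi^{-1}(\mathfrak{q})\in\Spec S_0$, and observe that one has to decide when the localization $(S_0'\otimes_{S_0}N)_{\mathfrak{q}}$ vanishes. The key algebraic identity is that localization commutes with tensor products: $(S_0'\otimes_{S_0}N)_{\mathfrak{q}}\cong (S_0')_{\mathfrak{q}}\otimes_{(S_0)_{\mathfrak{p}}}N_{\mathfrak{p}}$, obtained by first base changing along $S_0\to (S_0)_{\mathfrak{p}}$ and then localizing the resulting $(S_0)_{\mathfrak{p}}$-module structure, or more directly by noting $(S_0'\otimes_{S_0}N)_{\mathfrak{q}} = (S_0')_{\mathfrak{q}}\otimes_{S_0}N = (S_0')_{\mathfrak{q}}\otimes_{(S_0)_{\mathfrak{p}}}\bigl((S_0)_{\mathfrak{p}}\otimes_{S_0}N\bigr) = (S_0')_{\mathfrak{q}}\otimes_{(S_0)_{\mathfrak{p}}}N_{\mathfrak{p}}$.

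Once this reduction is in place, I would use Nakayama's lemma, which applies because $N$ (and hence $N_{\mathfrak{p}}$) is finitely generated. Concretely, $N_{\mathfrak{p}}=0$ iff $N_{\mathfrak{p}}\otimes_{(S_0)_{\mathfrak{p}}}\kappa(\mathfrak{p})=0$, where $\kappa(\mathfrak{p})$ is the residue field; this is exactly where finite generation is needed. Similarly, since $(S_0')_{\mathfrak{q}}$ is local with residue field $\kappa(\mathfrak{q})$ and the map $(S_0)_{\mathfrak{p}}\to (S_0')_{\mathfrak{q}}$ is local, a finitely generated $(S_0')_{\mathfrak{q}}$-module vanishes iff its reduction modulo $\mathfrak{q}$ does, and
\[
\bigl((S_0')_{\mathfrak{q}}\otimes_{(S_0)_{\mathfrak{p}}}N_{\mathfrak{p}}\bigr)\otimes_{(S_0')_{\mathfrak{q}}}\kappa(\mathfrak{q}) \cong \kappa(\mathfrak{q})\otimes_{\kappa(\mathfrak{p})}\bigl(N_{\mathfrak{p}}\otimes_{(S_0)_{\mathfrak{p}}}\kappa(\mathfrak{p})\bigr).
\]
Since $\kappa(\mathfrak{q})$ is a nonzero vector space over $\kappa(\mathfrak{p})$, the right-hand side vanishes iff $N_{\mathfrak{p}}\otimes_{(S_0)_{\mathfrak{p}}}\kappa(\mathfrak{p})=0$, i.e.\ iff $N_{\mathfrak{p}}=0$. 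Chaining these equivalences gives $(S_0'\otimes_{S_0}N)_{\mathfrak{q}}=0 \iff N_{\mathfrak{p}}=0$, which is precisely the claim $\Supp_{S_0'}(S_0'\otimes_{S_0}N)=\phi^{-1}(\Supp_{S_0}N)$.

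The only mild subtlety — the ``hard part,'' such as it is — is making sure finite generation is genuinely used and the argument does not silently assume it: without it, $N_{\mathfrak{p}}\otimes\kappa(\mathfrak{p})=0$ need not force $N_{\mathfrak{p}}=0$, and the statement can fail. Everything else is a routine composition of the standard facts that localization is exact and commutes with tensor products, plus Nakayama over the relevant local rings; since the paper explicitly flags this as ``a standard lemma,'' I would keep the writeup to a couple of lines, or simply cite \cite[Lemma 29.5.3(3)]{stacks-project} as is already done in the excerpt.
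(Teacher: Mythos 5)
Your proof is correct: the localization identity $(S_0'\otimes_{S_0}N)_{\mathfrak{q}}\cong (S_0')_{\mathfrak{q}}\otimes_{(S_0)_{\mathfrak{p}}}N_{\mathfrak{p}}$ plus Nakayama over the residue fields is exactly the standard argument, and you correctly isolate finite generation as the essential hypothesis. The paper gives no proof of its own here — it simply cites \cite[Lemma 29.5.3(3)]{stacks-project}, whose proof is the same reduction — so your writeup matches the intended (referenced) argument.
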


\begin{lemma}\label{lem:fg4}
  Let $S\r T$ be a homomorphism of $\NN$-graded rings,
  where $T$ is finitely generated as $T_0$-algebra and $T_0$ is a finite $S_0$-module.
  Let $\phi:S_0\to S_0'$ be an extension of rings and $S'=S'_0\otimes_{S_0}
  S$, $T'=S'_0\otimes_{S_0} T$. Then $T'$ is finitely generated as
  $S'$-module if and only $\phi^{-1}(F_{S}(T))=\emptyset$.
\end{lemma}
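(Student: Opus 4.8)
The plan is to combine Corollary \ref{cor:fg1} with the base-change formula of Lemma \ref{lem:fg2} and the compatibility of supports with base change from Lemma \ref{lem_auxiliary}, exploiting the fact (Lemma \ref{lem:supp}) that each $\Supp_{S_0}(T/S_{>0}T)_{\ge N}$ is already closed, so that $F_S(T)=\bigcap_{N\ge 0}\Supp_{S_0}(T/S_{>0}T)_{\ge N}$ without any closure operation. The point is to show that $F_{S'}(T')=\phi^{-1}(F_S(T))$, and then apply Corollary \ref{cor:fg1} on both sides: $T'$ is finitely generated over $S'$ iff $F_{S'}(T')=\emptyset$ iff $\phi^{-1}(F_S(T))=\emptyset$.

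First I would record that the hypotheses are stable under the base change $S_0\to S_0'$: since $T$ is finitely generated as a $T_0$-algebra it is generated in degrees $\le k$ for some $k$, and then $T'=S_0'\otimes_{S_0}T$ is generated in degrees $\le k$ as a $T_0'$-algebra, while $T_0'=S_0'\otimes_{S_0}T_0$ is a finite $S_0'$-module because $T_0$ is a finite $S_0$-module. Hence $S'\to T'$ again satisfies the hypotheses of Lemma \ref{lem:supp}, and in particular each graded piece $(T'/S'_{>0}T')_{\ge N}$ is a finitely generated $S_0'$-module with closed support.

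Next I would compute the supports. By Lemma \ref{lem:fg2} applied to $M=T$ (with its $S$-module structure) we have $T'/S'_{>0}T'=S_0'\otimes_{S_0}(T/S_{>0}T)$, and taking the degree-$\ge N$ part (which commutes with the flat-free base change $S_0'\otimes_{S_0}-$ degreewise) gives $(T'/S'_{>0}T')_{\ge N}=S_0'\otimes_{S_0}(T/S_{>0}T)_{\ge N}$. By Lemma \ref{lem:supp}(1) the module $(T/S_{>0}T)_{\ge N}$ is the finitely generated module $(T/S_{>0}T)_{\ge N,\le N+k-1}$, so Lemma \ref{lem_auxiliary} applies and yields
\[
\Supp\nolimits_{S_0'}(T'/S'_{>0}T')_{\ge N}=\phi^{-1}\bigl(\Supp\nolimits_{S_0}(T/S_{>0}T)_{\ge N}\bigr).
\]
Since all these supports are closed, intersecting over all $N\ge 0$ gives $F_{S'}(T')=\bigcap_N\phi^{-1}(\Supp_{S_0}(T/S_{>0}T)_{\ge N})=\phi^{-1}\bigl(\bigcap_N\Supp_{S_0}(T/S_{>0}T)_{\ge N}\bigr)=\phi^{-1}(F_S(T))$, using that preimage commutes with arbitrary intersections. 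Finally, applying Corollary \ref{cor:fg1} to $S'\to T'$: $T'$ is finitely generated over $S'$ iff $F_{S'}(T')=\emptyset$, which by the identity just established is equivalent to $\phi^{-1}(F_S(T))=\emptyset$.

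The only genuinely delicate point is the interchange of the countable intersection $\bigcap_N$ with the preimage $\phi^{-1}$ and the reduction of $F_S$ to an intersection of honest (not merely closed-up) supports; but this is exactly what Lemma \ref{lem:supp} is designed to supply, so no real obstacle remains. Everything else is formal manipulation of right-exact functors and of supports of finitely generated modules.
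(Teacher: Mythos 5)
Your proof follows essentially the same route as the paper: base change the quotient $T/S_{>0}T$ via Lemma \ref{lem:fg2}, use Lemma \ref{lem:supp} to reduce to finitely generated truncations, pull supports back through $\phi$ via Lemma \ref{lem_auxiliary}, commute $\phi^{-1}$ with the intersection defining $F$, and close with Corollary \ref{cor:fg1}. One imprecision worth correcting: Lemma \ref{lem:supp}(1) does \emph{not} say that $(T/S_{>0}T)_{\ge N}$ equals the finitely generated module $(T/S_{>0}T)_{\ge N,\le N+k-1}$ --- these modules differ as soon as $(T/S_{>0}T)$ is nonzero in degrees $\ge N+k$, and $(T/S_{>0}T)_{\ge N}$ need not be a finitely generated $S_0$-module at all. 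The lemma only identifies their \emph{supports}. So to invoke Lemma \ref{lem_auxiliary} (which requires a finitely generated module) you must first pass to the truncation $(\ge N,\le N+k-1)$, apply base change and Lemma \ref{lem_auxiliary} there, and then use Lemma \ref{lem:supp}(1) once more, now over $S_0$, to return to $\Supp_{S_0}(T/S_{>0}T)_{\ge N}$; this double use of \ref{lem:supp}(1) is exactly the chain \eqref{eq:annfgenk} in the paper. Also, the parenthetical appeal to ``flat-free base change'' is a red herring: $(S_0'\otimes_{S_0}M)_{\ge N}=S_0'\otimes_{S_0}M_{\ge N}$ holds for any graded $S_0$-module $M$ simply because the tensor is applied degreewise (the grading on $S_0$ is concentrated in degree $0$), no flatness or freeness being needed.
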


\begin{proof}
Note that $S$ and $M=T$ satisfy the conditions of Lemma \ref{lem:fg1}. The same goes for $S'$ and $M=T'$.
Assume that $T$ is generated as $T_0$-algebra in degrees $\leq k$. Then the same holds for $T'$ viewed as $T'_0$-algebra.
We have by Lemma \ref{lem:fg2}
\begin{equation} \label{eq:tensor}
  T'/S'_{>0} T'=S'_0\otimes_{S_0} T/S_{>0} T.
  \end{equation}
We compute:
\begin{equation}\label{eq:annfgenk}
  \begin{aligned}
    \Supp\nolimits_{S'_0}(T'/S'_{>0}T')_{\ge N}&=\Supp\nolimits_{S'_0}(T'/S'_{>0}T')_{[N,N+k-1]} \qquad &\text{(Lemma \ref{lem:supp}(1))}\\
   &=\Supp\nolimits_{S'_0}(S'_0\otimes_{S_0} (T/S_{>0}T)_{[ N,N+k-1]})\qquad& \text{(by \eqref{eq:tensor})}\\
   & =
  \phi^{-1}( \Supp\nolimits_{S_0}(T/S_{>0}T)_{[N, N+k-1]})\qquad &\text{(Lemma \ref{lem_auxiliary})} \\
&=  \phi^{-1}( \Supp\nolimits_{S_0}(T/S_{>0}T)_{\ge N}) \qquad &\text{(Lemma \ref{lem:supp}(1))}.
  \end{aligned}
\end{equation}
Hence
\begin{align*}
  F_{S'}(T')&=\bigcap_{N\ge 0} \Supp\nolimits_{S'_0} (T'/S'_{>0} T')_{\ge N}\qquad &\text{(Lemma \ref{lem:supp}(2))}\\
            &=\bigcap_{N\ge 0}\phi^{-1}( \Supp\nolimits_{S_0}(T/S_{>0}T)_{\ge N})\qquad &\text{(by \eqref{eq:annfgenk})}\\
            &=\phi^{-1}( \bigcap_{N\ge 0}\Supp\nolimits_{S_0}(T/S_{>0}T)_{\ge N})&\\
  &=\phi^{-1}(F_S(T))\qquad& \text{(Lemma \ref{lem:supp}(2))}.
\end{align*}
 Since by Corollary \ref{cor:fg1}, $T'/S'$ is finite
if and only if $F_{S'}(T')=\emptyset$, we may conclude.
\end{proof}

\begin{proof}[Proof of Proposition \ref{prop:finite}]
  Set $U=\Spec S_0-F_S(T)$. To prove the proposition we may assume that
  $W=\Spec S'_0$ for some ring morphism $\phi:S_0\r S'_0$. We then invoke Lemma \ref{lem:fg4}.
\end{proof}

%\bibliography{nccr}

\begin{thebibliography}{ATVdB90}

\bibitem[ATVdB90]{ATVdB}
M.~Artin, J.~Tate, and M.~Van~den Bergh, \emph{Some algebras associated to
  automorphisms of elliptic curves}, The {G}rothendieck {F}estschrift, {V}ol.
  {I}, Progr. Math., vol.~86, Birkh\"{a}user Boston, Boston, MA, 1990,
  pp.~33--85.

\bibitem[GKZ08]{GKZbook}
I.~M. Gel'fand, M.~M. Kapranov, and A.~V. Zelevinsky, \emph{Discriminants,
  resultants and multidimensional determinants}, Modern Birkh\"auser Classics,
  Birkh\"auser Boston, Inc., Boston, MA, 2008, Reprint of the 1994 edition.

\bibitem[Kit17]{Kite}
A.~Kite, \emph{Discriminants and quasi-symmetry}, arXiv:1711.08940 [math.AG],
  2017.

\bibitem[Kit18]{Kitethesis}
\bysame, \emph{Fundamental group actions on derived categories}, 2018, PhD
  thesis,
  \url{https://www.homepages.ucl.ac.uk/~ucaheps/papers/kite%20-%20thesis.pdf}.

\bibitem[KS22]{KiteSegal}
A.~Kite and E.~Segal, \emph{Discriminants and semi-orthogonal decompositions},
  Comm. Math. Phys. \textbf{390} (2022), no.~2, 907--931.

\bibitem[Sch13]{open}
K.~Schwede, \emph{Counter example of upper semicontinuity of global fiber
  dimension on the source},
  \url{https://mathoverflow.net/questions/193/when-is-fiber-dimension-upper-semi-continuous/184925},
  June 2013.

\bibitem[Ser66]{MR212214}
J.-P. Serre, \emph{Prolongement de faisceaux analytiques coh\'{e}rents}, Ann.
  Inst. Fourier (Grenoble) \textbf{16} (1966), no.~fasc. 1, 363--374.

\bibitem[{Sta}22]{stacks-project}
The {Stacks Project Authors}, \emph{\textit{Stacks Project}},
  \url{https://stacks.math.columbia.edu}, 2022.

\bibitem[{\v S}VdB]{main_document}
{\v S}.~{\v S}penko and M.~Van~den Bergh, \emph{{HMS symmetries of toric
  boundary divisors}}, in preparation.

\end{thebibliography}
\bibliographystyle{amsalpha}

\pagebreak
\end{document}